\providecommand{\U}[1]{\protect\rule{.1in}{.1in}}
\newtheorem{theorem}{Theorem}
\newtheorem{definition}[theorem]{Definition}
\newtheorem{example}[theorem]{Example}
\newtheorem{lemma}[theorem]{Lemma}
\newtheorem{notation}[theorem]{Notation}
\newtheorem{remark}[theorem]{Remark}
\newenvironment{proof}[1][Proof]{\noindent\textbf{#1.} }{\ \rule{0.5em}{0.5em}}
\newdimen\dummy
\begin{document}

\title{Binary signals: a note on the prime period of a point}
\author{Serban E. Vlad\\Str. Zimbrului, Nr. 3, Bl. PB68, Ap. 11, 410430, Oradea, email: serban\_e\_vlad@yahoo.com}
\maketitle

\begin{abstract}
The 'nice' $x:\mathbf{R}\rightarrow\{0,1\}^{n}$ functions from the
asynchronous systems theory are called signals. The periodicity of a point of
the orbit of the signal $x$ is defined and we give a note on the existence of
the prime period.

Keywords and phrases: \textit{binary signal, period, prime period.}

\end{abstract}

MSC (2008): 94A12

The asynchronous systems are the models of the digital electrical circuits and
the 'nice' functions representing their inputs and states are called signals.
Such systems are generated by Boolean functions that iterate like the
dynamical systems, but the iterations happen on some coordinates only, not on
all the coordinates (unlike the dynamical systems). In order to study their
periodicity, we need to study the periodicity of the (values of the) signals
first. Our present aim is to define and to characterize the periodicity and
the prime period of a point of the orbit of a signal.

\begin{definition}
\label{Not1}The set $\mathbf{B}=\{0,1\}$ is a field relative to $^{\prime
}\oplus^{\prime},$ $^{\prime}\cdot^{\prime},$ the modulo 2 sum and the
product$.$ A linear space structure is induced on $\mathbf{B}^{n},n\geq1.$
\end{definition}

\begin{notation}
\label{Not4}$\chi_{A}:\mathbf{R}\rightarrow\mathbf{B}$ is the notation of the
characteristic function of the set $A\subset\mathbf{R}:\forall t\in
\mathbf{R},$%
\[
\chi_{A}(t)=\left\{
\begin{array}
[c]{c}%
1,if\;t\in A,\\
0,otherwise
\end{array}
\right.  .
\]

\end{notation}

\begin{definition}
\label{Def2}The \textbf{continuous time signals} are the functions
$x:\mathbf{R}\rightarrow\mathbf{B}^{n}$ of the form $\forall t\in\mathbf{R},$%
\begin{equation}
x(t)=\mu\cdot\chi_{(-\infty,t_{0})}(t)\oplus x(t_{0})\cdot\chi_{\lbrack
t_{0},t_{1})}(t)\oplus...\oplus x(t_{k})\cdot\chi_{\lbrack t_{k},t_{k+1}%
)}(t)\oplus... \label{per189}%
\end{equation}
where $\mu\in\mathbf{B}^{n}$ and $t_{k}\in\mathbf{R},k\in\mathbf{N}$ is
strictly increasing and unbounded from above$.$ Their set is denoted by
$S^{(n)}.$ $\mu$ is usually denoted by $x(-\infty+0)$ and is called the
\textbf{initial value} of $x$.
\end{definition}

\begin{definition}
The \textbf{left limit} $x(t-0)$ of $x(t)$ from (\ref{per189}) is by
definition the function $\forall t\in\mathbf{R},$%
\begin{equation}
x(t-0)=\mu\cdot\chi_{(-\infty,t_{0}]}(t)\oplus x(t_{0})\cdot\chi_{(t_{0}%
,t_{1}]}(t)\oplus...\oplus x(t_{k})\cdot\chi_{(t_{k},t_{k+1}]}(t)\oplus...
\end{equation}

\end{definition}

\begin{remark}
The definition of $x(t-0)$ does not depend on the choice of $(t_{k})$ that is
not unique in (\ref{per189}); for any $t^{\prime}\in\mathbf{R},$ the existence
of $x(t^{\prime}-0)$ is used in applications under the form $\exists
\varepsilon>0,\forall\xi\in(t^{\prime}-\varepsilon,t^{\prime}),x(\xi
)=x(t^{\prime}-0).$
\end{remark}

\begin{definition}
The set $Or(x)=\{x(t)|t\in\mathbf{R}\}$ is called the \textbf{orbit} of $x$.
\end{definition}

\begin{notation}
\label{Not5}For $x\in S^{(n)}$ and $\mu\in Or(x),$ we denote
\begin{equation}
\mathbf{T}_{\mu}^{x}=\{t|t\in\mathbf{R},x(t)=\mu\}.
\end{equation}

\end{notation}

\begin{definition}
The point $\mu\in Or(x)$ is called a \textbf{periodic point} of $x\in
S^{(n)}\,$or of $Or(x)$ if $T>0,t^{\prime}\in\mathbf{R}$ exist such that%
\begin{equation}
(-\infty,t^{\prime}]\subset\mathbf{T}_{x(-\infty+0)}^{x},\label{per492}%
\end{equation}%
\begin{equation}
\forall t\in\mathbf{T}_{\mu}^{x}\cap\lbrack t^{\prime},\infty),\{t+zT|z\in
\mathbf{Z}\}\cap\lbrack t^{\prime},\infty)\subset\mathbf{T}_{\mu}%
^{x}.\label{per483}%
\end{equation}
In this case $T$ is called the \textbf{period} of $\mu$ and the least $T$ like
above is called the \textbf{prime period} of $\mu$.
\end{definition}

\begin{theorem}
\label{The75}Let $x\in S^{(n)},$ $\mu=x(-\infty+0),$ $T>0$ and the points
$t_{0},t_{1}\in\mathbf{R}$ having the property that%
\begin{equation}
t_{0}<t_{1}<t_{0}+T,
\end{equation}%
\begin{equation}
(-\infty,t_{0})\cup\lbrack t_{1},t_{0}+T)\cup\lbrack t_{1}+T,t_{0}%
+2T)\cup\lbrack t_{1}+2T,t_{0}+3T)\cup...=\mathbf{T}_{\mu}^{x}\label{per501}%
\end{equation}
hold$.$

a) For any $t^{\prime}\in\lbrack t_{1}-T,t_{0}),$ the properties
(\ref{per492}), (\ref{per483}) are fulfilled and for any $t^{\prime}%
\notin\lbrack t_{1}-T,t_{0}),$ at least one of the properties (\ref{per492}),
(\ref{per483}) is false.

b) For any $T^{\prime}>0,t^{\prime\prime}\in\mathbf{R}$ such that%
\begin{equation}
(-\infty,t^{\prime\prime}]\subset\mathbf{T}_{x(-\infty+0)}^{x},\label{per503}%
\end{equation}%
\begin{equation}
\forall t\in\mathbf{T}_{\mu}^{x}\cap\lbrack t^{\prime\prime},\infty
),\{t+zT^{\prime}|z\in\mathbf{Z}\}\cap\lbrack t^{\prime\prime},\infty
)\subset\mathbf{T}_{\mu}^{x},\label{per504}%
\end{equation}
we have $T^{\prime}\geq T$ and $t^{\prime\prime}\in\lbrack t_{1}-T^{\prime
},t_{0}).$
\end{theorem}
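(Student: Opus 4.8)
The plan is to first reformulate the hypothesis (\ref{per501}) geometrically. Writing $\mathbf{T}_{\mu}^{x}=(-\infty,t_{0})\cup\bigcup_{k\geq0}[t_{1}+kT,t_{0}+(k+1)T)$, I would introduce the genuinely $T$-periodic set $P=\bigcup_{k\in\mathbf{Z}}[t_{1}+kT,t_{0}+(k+1)T)$, which satisfies $P+T=P$. The key structural observation, proved by a direct computation using $t_{1}-T<t_{0}$ (which follows from $t_{1}<t_{0}+T$), is that $\mathbf{T}_{\mu}^{x}$ coincides with $P$ on the half-line $[t_{1}-T,\infty)$ and equals the whole of $(-\infty,t_{0})$ on the left. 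Thus $\mathbf{T}_{\mu}^{x}$ has maximal gaps exactly $[t_{0}+jT,t_{1}+jT)$ for $j\geq0$, and is ``completely filled'' to the left of $t_{0}$. Every argument below rests on this picture.

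For part a) I would treat (\ref{per492}) and (\ref{per483}) separately. Since $t_{0}\notin\mathbf{T}_{\mu}^{x}$ while $(-\infty,t_{0})\subset\mathbf{T}_{\mu}^{x}$, condition (\ref{per492}) holds if and only if $t^{\prime}<t_{0}$. For (\ref{per483}) I claim it holds if and only if $t^{\prime}\geq t_{1}-T$. The ``if'' direction is immediate from the structural observation: for $t^{\prime}\geq t_{1}-T$ any $t\in\mathbf{T}_{\mu}^{x}\cap[t^{\prime},\infty)$ lies in $P$, so every $t+zT$ lies in $P$ by periodicity, and if moreover $t+zT\geq t^{\prime}\geq t_{1}-T$ then $t+zT\in P\cap[t_{1}-T,\infty)=\mathbf{T}_{\mu}^{x}\cap[t_{1}-T,\infty)\subset\mathbf{T}_{\mu}^{x}$. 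For the ``only if'' direction, when $t^{\prime}<t_{1}-T$ I would exhibit a violation: pick $t\in[t_{0}-T,t_{1}-T)\cap[t^{\prime},\infty)$, which is nonempty and lies in $\mathbf{T}_{\mu}^{x}$; then the orbit point $t+T\in[t_{0},t_{1})$ satisfies $t+T\geq t^{\prime}$ but $t+T\notin\mathbf{T}_{\mu}^{x}$. Intersecting the two characterizations gives $t^{\prime}\in[t_{1}-T,t_{0})$ exactly when both hold, which is the assertion.

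For part b) the first move is to upgrade (\ref{per504}) to a genuine two-sided invariance: applying it with $z=1$ and with $z=-1$ shows that for $s,s+zT^{\prime}\in[t^{\prime\prime},\infty)$ one has $s\in\mathbf{T}_{\mu}^{x}$ if and only if $s+zT^{\prime}\in\mathbf{T}_{\mu}^{x}$. From (\ref{per503}) and $t_{0}\notin\mathbf{T}_{\mu}^{x}$ I again get $t^{\prime\prime}<t_{0}$. To obtain $T^{\prime}\geq T$ I would track the right endpoints of the ``on'' intervals, i.e. the points $p\geq t^{\prime\prime}$ with $p\notin\mathbf{T}_{\mu}^{x}$ but $p-\varepsilon\in\mathbf{T}_{\mu}^{x}$ for small $\varepsilon>0$, which are exactly $\{t_{0}+jT:j\geq0\}$. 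Two-sided invariance forces this set to be stable under adding $T^{\prime}$, so $t_{0}+jT+T^{\prime}=t_{0}+j^{\prime}T$ for some integer $j^{\prime}$; whence $T^{\prime}=mT$ for some integer $m\geq1$, and in particular $T^{\prime}\geq T$. Finally, for $t^{\prime\prime}\geq t_{1}-T^{\prime}$ I argue by contradiction: if $t^{\prime\prime}<t_{1}-T^{\prime}$, then since $T^{\prime}=mT$ the interval $[t_{0}-T^{\prime},t_{1}-T^{\prime})\subset(-\infty,t_{0})$ meets $[t^{\prime\prime},\infty)$, and any $s$ in this intersection lies in $\mathbf{T}_{\mu}^{x}$ while its shift $s+T^{\prime}\in[t_{0},t_{1})$ does not, contradicting invariance. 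Together with $t^{\prime\prime}<t_{0}$ this gives $t^{\prime\prime}\in[t_{1}-T^{\prime},t_{0})$.

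I expect the main obstacle to be bookkeeping rather than conceptual: the set $\mathbf{T}_{\mu}^{x}$ is not translation invariant near $t_{0}$ (the left tail is ``overfilled'' compared with $P$), so every shifting argument must keep careful track of which half-open endpoints and which indices, $j\geq0$ versus $j\in\mathbf{Z}$, are involved, and must verify that the translated point still lies in $[t^{\prime},\infty)$ or $[t^{\prime\prime},\infty)$ before invoking (\ref{per483}) or (\ref{per504}). The one genuinely nontrivial step is extracting the two-sided invariance and the endpoint tracking in part b), since (\ref{per504}) is stated only as a forward inclusion; once that is in hand, the rest reduces to the structural observation of the first paragraph.
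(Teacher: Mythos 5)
Your proposal is correct; part a) is essentially the paper's argument better organized, while part b) takes a genuinely different route. For a), the paper verifies (\ref{per483}) on $[t_{1}-T,t_{0})$ by an explicit three-case analysis on the location of $t$ (in $[t^{\prime},t_{0})$, in $[t^{\prime}+(k_{1}+1)T,t_{0}+(k_{1}+1)T)$, or in $[t_{1}+k_{1}T,t^{\prime}+(k_{1}+1)T)$); your observation that $\mathbf{T}_{\mu}^{x}$ agrees with the fully periodic set $P$ on $[t_{1}-T,\infty)$ collapses all three cases into one line, and your counterexamples for $t^{\prime}<t_{1}-T$ and $t^{\prime}\geq t_{0}$ coincide with the paper's. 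For b), the paper proves $T^{\prime}\geq T$ by supposing $T^{\prime}<T$ and exhibiting a concrete point $t\in[\max\{t_{1},t_{0}+T-T^{\prime}\},\min\{t_{0}+T,t_{1}+T-T^{\prime}\})$ whose shift $t+T^{\prime}$ lands in the gap $[t_{0}+T,t_{1}+T)$; you instead upgrade (\ref{per504}) to two-sided invariance and track the right endpoints $\{t_{0}+jT\}_{j\geq0}$ of the on-intervals, concluding $T^{\prime}=mT$ with $m\geq1$ an integer. Your route requires formalizing the notion of a right endpoint and checking that invariance preserves it (which works because $p-\varepsilon>t^{\prime\prime}$ for small $\varepsilon$, since $p\geq t_{0}>t^{\prime\prime}$), but it proves strictly more than the theorem asks: every $T^{\prime}$ satisfying (\ref{per503}), (\ref{per504}) is an integer multiple of $T$, not merely $\geq T$. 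A further small advantage is your ordering: establishing $T^{\prime}\geq T$ first makes $t_{1}-T^{\prime}\leq t_{0}$ available when you rule out $t^{\prime\prime}<t_{1}-T^{\prime}$, so the violating interval $[t_{0}-T^{\prime},t_{1}-T^{\prime})$ is guaranteed to sit inside $(-\infty,t_{0})\subset\mathbf{T}_{\mu}^{x}$, whereas the paper handles that bound by analogy with a) before $T^{\prime}\geq T$ is in hand.
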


\begin{proof}
a) Let $t^{\prime}\in\lbrack t_{1}-T,t_{0}).$ From%
\[
(-\infty,t^{\prime}]\subset(-\infty,t_{0})\subset\mathbf{T}_{\mu}^{x},
\]
we infer the truth of (\ref{per492}).

Furthermore, we have%
\[
\mathbf{T}_{\mu}^{x}\cap\lbrack t^{\prime},\infty)=[t^{\prime},t_{0}%
)\cup\lbrack t_{1},t_{0}+T)\cup\lbrack t_{1}+T,t_{0}+2T)\cup\lbrack
t_{1}+2T,t_{0}+3T)\cup...
\]
and we take an arbitrary $t\in\mathbf{T}_{\mu}^{x}\cap\lbrack t^{\prime
},\infty).$ If $t\in\lbrack t^{\prime},t_{0}),$ then
\[
\{t+zT|z\in\mathbf{Z}\}\cap\lbrack t^{\prime},\infty
)=\{t,t+T,t+2T,...\}\subset
\]%
\[
\subset\lbrack t^{\prime},t_{0})\cup\lbrack t^{\prime}+T,t_{0}+T)\cup\lbrack
t^{\prime}+2T,t_{0}+2T)\cup...\subset
\]%
\[
\subset\lbrack t^{\prime},t_{0})\cup\lbrack t_{1},t_{0}+T)\cup\lbrack
t_{1}+T,t_{0}+2T)\cup...\subset\mathbf{T}_{\mu}^{x}.
\]
If $\exists k_{1}\geq0,t\in\lbrack t_{1}+k_{1}T,t_{0}+(k_{1}+1)T),$ then there
are two possibilities:

Case $t\in\lbrack t^{\prime}+(k_{1}+1)T,t_{0}+(k_{1}+1)T),$ when
\[
\{t+zT|z\in\mathbf{Z}\}\cap\lbrack t^{\prime},\infty)=\{t+(-k_{1}%
-1)T,t+(-k_{1})T,t+(-k_{1}+1)T,...\}\subset
\]%
\[
\subset\lbrack t^{\prime},t_{0})\cup\lbrack t^{\prime}+T,t_{0}+T)\cup\lbrack
t^{\prime}+2T,t_{0}+2T)\cup...\subset
\]%
\[
\subset\lbrack t^{\prime},t_{0})\cup\lbrack t_{1},t_{0}+T)\cup\lbrack
t_{1}+T,t_{0}+2T)\cup...\subset\mathbf{T}_{\mu}^{x};
\]

Case $t\in\lbrack t_{1}+k_{1}T,t^{\prime}+(k_{1}+1)T),$ when%
\[
\{t+zT|z\in\mathbf{Z}\}\cap\lbrack t^{\prime},\infty)=\{t+(-k_{1}%
)T,t+(-k_{1}+1)T,t+(-k_{1}+2)T,...\}\subset
\]%
\[
\subset\lbrack t_{1},t^{\prime}+T)\cup\lbrack t_{1}+T,t^{\prime}%
+2T)\cup\lbrack t_{1}+2T,t^{\prime}+3T)\cup...\subset
\]%
\[
\subset\lbrack t_{1},t_{0}+T)\cup\lbrack t_{1}+T,t_{0}+2T)\cup\lbrack
t_{1}+2T,t_{0}+3T)\cup...\subset\mathbf{T}_{\mu}^{x}.
\]

We suppose now that $t^{\prime}\notin\lbrack t_{1}-T,t_{0}).$ If $t^{\prime
}<t_{1}-T,$ we notice that $\max\{t^{\prime},t_{0}-T\}<t_{1}-T$ and that for
any $t\in\lbrack\max\{t^{\prime},t_{0}-T\},t_{1}-T),$ we have $t\in
\mathbf{T}_{\mu}^{x}\cap\lbrack t^{\prime},\infty)$ but%
\[
t+T\in\{t+zT|z\in\mathbf{Z}\}\cap\lbrack t^{\prime},\infty)\cap\lbrack
t_{0},t_{1})\mathbf{,}%
\]
thus $t+T\notin\mathbf{T}_{\mu}^{x}$ and (\ref{per483}) is false. On the other
hand if $t^{\prime}\geq t_{0},$ then $x(t_{0})\neq\mu$ implies $t_{0}%
\notin\mathbf{T}_{\mu}^{x}$ and consequently (\ref{per492}) is false.

b) The fact that $t^{\prime\prime}\in\lbrack t_{1}-T^{\prime},t_{0})$ is
proved similarly with the statement $t^{\prime}\in\lbrack t_{1}-T,t_{0})$ from
a): $t^{\prime\prime}\geq t_{0}$ is in contradiction with (\ref{per503}) and
$t^{\prime\prime}<t_{1}-T^{\prime}$ is in contradiction with (\ref{per504}).

We suppose now against all reason that (\ref{per503}), (\ref{per504}) are true
and $T^{\prime}<T.$ Let us note in the beginning that%
\[
\max\{t_{1},t_{0}+T-T^{\prime}\}<\min\{t_{0}+T,t_{1}+T-T^{\prime}\}
\]
is true, since all of $t_{1}<t_{0}+T,t_{1}<t_{1}+T-T^{\prime},t_{0}%
+T-T^{\prime}<t_{0}+T,t_{0}+T-T^{\prime}<t_{1}+T-T^{\prime}$ are true. We
infer that any $t\in\lbrack\max\{t_{1},t_{0}+T-T^{\prime}\},\min
\{t_{0}+T,t_{1}+T-T^{\prime}\})$ fulfills $t\in\lbrack t_{1},t_{0}%
+T)\subset\mathbf{T}_{\mu}^{x}\cap\lbrack t^{\prime\prime},\infty)$ and%
\[
t_{0}+T\leq\max\{t_{1}+T^{\prime},t_{0}+T\}\leq t+T^{\prime}<\min
\{t_{0}+T+T^{\prime},t_{1}+T\}\leq t_{1}+T
\]
in other words $t+T^{\prime}\in\{t+zT^{\prime}|z\in\mathbf{Z}\}\cap\lbrack
t^{\prime\prime},\infty),$ but $t+T^{\prime}\in\lbrack t_{0}+T,t_{1}+T),$ thus
$t+T^{\prime}\notin\mathbf{T}_{\mu}^{x},$ contradiction with (\ref{per504}).
We conclude that $T^{\prime}\geq T.$
\end{proof}

\begin{lemma}
\label{Lem8}We suppose that the point $\mu\in Or(x)$ is periodic:
$T>0,t^{\prime}\in\mathbf{R}$ exist such that (\ref{per492}), (\ref{per483})
hold. If for $t_{1}<t_{2}$ we have $[t_{1},t_{2})\subset\mathbf{T}_{\mu}%
^{x}\cap\lbrack t^{\prime},\infty),$ then $\forall k\geq1,[t_{1}%
+kT,t_{2}+kT)\subset\mathbf{T}_{\mu}^{x}.$
\end{lemma}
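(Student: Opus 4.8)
The plan is to reduce the statement to a pointwise application of the periodicity hypothesis (\ref{per483}). I would fix an arbitrary $k\geq1$ and an arbitrary point $s\in\lbrack t_{1}+kT,t_{2}+kT)$, with the goal of showing $s\in\mathbf{T}_{\mu}^{x}$. Since $k$ and $s$ are arbitrary, this suffices.

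First I would set $t=s-kT$. The defining inequalities of the shifted interval $[t_{1}+kT,t_{2}+kT)$ translate directly into $t\in\lbrack t_{1},t_{2})$, and the hypothesis $[t_{1},t_{2})\subset\mathbf{T}_{\mu}^{x}\cap\lbrack t^{\prime},\infty)$ then yields both $t\in\mathbf{T}_{\mu}^{x}$ and $t\geq t^{\prime}$. In particular $t$ is an admissible input for (\ref{per483}).

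Next I would apply (\ref{per483}) to this $t$, obtaining $\{t+zT\mid z\in\mathbf{Z}\}\cap\lbrack t^{\prime},\infty)\subset\mathbf{T}_{\mu}^{x}$. It then remains only to verify that $s$ lies in the left-hand side. Writing $s=t+kT$ exhibits $s$ as the $z=k$ member of $\{t+zT\mid z\in\mathbf{Z}\}$; and since $t\geq t^{\prime}$ together with $kT>0$ (because $k\geq1$ and $T>0$) gives $s=t+kT>t^{\prime}$, we have $s\in\lbrack t^{\prime},\infty)$. Hence $s$ belongs to the intersection, and therefore $s\in\mathbf{T}_{\mu}^{x}$, completing the argument.

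I do not expect a genuine obstacle here: the only step requiring a moment's care is confirming $s\geq t^{\prime}$, which is immediate from $t\geq t^{\prime}$ and the positivity of $kT$. This verification is precisely what licenses the invocation of (\ref{per483}) and what makes the forward range $k\geq1$ (rather than all of $\mathbf{Z}$) the natural scope of the conclusion, since shifting by negative multiples of $T$ could carry points below $t^{\prime}$, where the hypothesis grants no control.
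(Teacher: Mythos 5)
Your proposal is correct and is essentially the paper's own proof: both fix $k\geq1$ and a point of the shifted interval, pull it back by $kT$ into $[t_{1},t_{2})\subset\mathbf{T}_{\mu}^{x}\cap[t^{\prime},\infty)$, and apply (\ref{per483}). Your explicit check that the shifted point stays in $[t^{\prime},\infty)$ is a small detail the paper leaves implicit, but the argument is the same.
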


\begin{proof}
Let $k\geq1$ and $t\in\lbrack t_{1}+kT,t_{2}+kT)$ be arbitrary. As
$t-kT\in\lbrack t_{1},t_{2})$ and from the hypothesis $t-kT\in\mathbf{T}_{\mu
}^{x}\cap\lbrack t^{\prime},\infty),$ we have from (\ref{per483}) that%
\[
t\in\{t-kT+zT|z\in\mathbf{Z}\}\cap\lbrack t^{\prime},\infty)\subset
\mathbf{T}_{\mu}^{x}.
\]

\end{proof}

\begin{theorem}
\label{The76}We ask that $x$ is not constant and let the point $\mu
=x(-\infty+0)$ be given, as well as $T>0,t^{\prime}\in\mathbf{R}$ such that
(\ref{per492}), (\ref{per483}) hold. We define $t_{0},t_{1}\in\mathbf{R}$ by
the requests%
\begin{equation}
\forall t<t_{0},x(t)=\mu,\label{per875}%
\end{equation}%
\begin{equation}
x(t_{0})\neq\mu,\label{per876}%
\end{equation}%
\begin{equation}
t_{1}<t_{0}+T,\label{per891}%
\end{equation}%
\begin{equation}
\forall t\in\lbrack t_{1},t_{0}+T),x(t)=x(t_{0}+T-0),\label{per877}%
\end{equation}%
\begin{equation}
x(t_{1}-0)\neq x(t_{1}).\label{per878}%
\end{equation}
Then the following statements are true:%
\begin{equation}
t_{1}-T\leq t^{\prime}<t_{0}<t_{1},\label{per879}%
\end{equation}%
\begin{equation}
(-\infty,t_{0})\cup\lbrack t_{1},t_{0}+T)\cup\lbrack t_{1}+T,t_{0}%
+2T)\cup\lbrack t_{1}+2T,t_{0}+3T)\cup...\subset\mathbf{T}_{\mu}%
^{x}.\label{per880}%
\end{equation}

\end{theorem}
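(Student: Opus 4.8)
The plan is to read off the value $x(t_0+T-0)$ from the periodicity hypothesis, then to pin down the ordering (\ref{per879}), and finally to assemble (\ref{per880}) from (\ref{per875}), (\ref{per877}) and Lemma~\ref{Lem8}. I would organize the argument so that the identity $x(t_0+T-0)=\mu$ is proved first, because it is what turns the abstract request (\ref{per877}) into the concrete statement ``$x=\mu$ on $[t_1,t_0+T)$'' that all later steps depend on.

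First I would settle $t'<t_0$ and compute $x(t_0+T-0)$. From (\ref{per492}) we have $x(t')=\mu$, while (\ref{per876}) gives $x(t_0)\neq\mu$; since (\ref{per875}) forces $x=\mu$ on all of $(-\infty,t_0)$, the case $t'\geq t_0$ would yield $x(t_0)=\mu$, a contradiction, so $t'<t_0$. Hence $(t',t_0)$ is nonempty with $x=\mu$ there, so every $t\in(t',t_0)$ lies in $\mathbf{T}_{\mu}^{x}\cap[t',\infty)$; applying (\ref{per483}) with $z=1$ gives $x(t+T)=\mu$, and letting $t$ range over $(t',t_0)$ shows $x=\mu$ on $(t'+T,t_0+T)$. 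Since $x$ is constant $\mu$ on a left-neighborhood of $t_0+T$, the Remark on one-sided limits gives $x(t_0+T-0)=\mu$. Then (\ref{per877}) reads $x=\mu$ on $[t_1,t_0+T)$, and by (\ref{per891}) in particular $x(t_1)=\mu$.

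Next I would finish (\ref{per879}). The inequality $t'<t_0$ is already in hand. For $t_0<t_1$: from $x(t_1)=\mu\neq x(t_0)$ we get $t_1\neq t_0$, while $t_1<t_0$ is impossible, because then $(-\infty,t_0)\supset(-\infty,t_1]$ would give $x(t_1-0)=\mu=x(t_1)$, contradicting (\ref{per878}). For $t_1-T\leq t'$ I would argue by contradiction: assume $t_1-T>t'$. By (\ref{per878}) and the existence of the left limit there is $\varepsilon>0$ with $x(\xi)=x(t_1-0)\neq\mu$ for all $\xi\in(t_1-\varepsilon,t_1)$; shrinking $\varepsilon$ I may require $t_1-\varepsilon-T>t'$ (possible since $t_1-T>t'$) while $t_1-T<t_0$ holds by (\ref{per891}), so that $(t_1-\varepsilon,t_1)-T\subset(t',t_0)$. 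Then for such $\xi$ we have $\xi-T\in(t',t_0)$, hence $x(\xi-T)=\mu$ with $\xi-T\geq t'$; applying (\ref{per483}) to the point $\xi-T$ with $z=1$ gives $x(\xi)=\mu$, contradicting $x(\xi)\neq\mu$. Therefore $t_1-T\leq t'$, completing (\ref{per879}).

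Finally I would assemble (\ref{per880}). The inclusion $(-\infty,t_0)\subset\mathbf{T}_{\mu}^{x}$ is (\ref{per875}), and $[t_1,t_0+T)\subset\mathbf{T}_{\mu}^{x}$ was obtained above. Since $t'<t_0<t_1$, we have $[t_1,t_0+T)\subset\mathbf{T}_{\mu}^{x}\cap[t',\infty)$, so Lemma~\ref{Lem8} applied to this interval yields $[t_1+kT,t_0+(k+1)T)\subset\mathbf{T}_{\mu}^{x}$ for every $k\geq1$, which are exactly the remaining members of the union. The main obstacle I anticipate lies in the second and third paragraphs: the identity $x(t_0+T-0)=\mu$ and the inequality $t_1-T\leq t'$ both require using the one-sided limits (guaranteed by the Remark) carefully and applying the periodicity relation (\ref{per483}) in the correct temporal direction, shifting by $\pm T$ between the region $(-\infty,t_0)$ where $x=\mu$ and the neighborhood of $t_1$ where $x\neq\mu$. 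Once these are in place, the order relations and the inclusions in (\ref{per880}) follow routinely via Lemma~\ref{Lem8}.
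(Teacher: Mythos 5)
Your proof is correct, but it is organized along a genuinely different decomposition than the paper's. The paper first establishes $t_0\leq t_1$ and $t'<t_0$ and then splits into two cases according to the position of $t'$: for $t'\in[t_1-T,t_0)$ it evaluates $x(t_1)$ through the single point $t'+T\in[t_1,t_0+T)$ via $\mu=x(t')=x(t'+T)=x(t_1)$ and then invokes Lemma \ref{Lem8}; for $t'<t_1-T$ it derives a contradiction with (\ref{per878}) by an $\varepsilon$-argument at $t_1$. You instead prove $x(t_0+T-0)=\mu$ unconditionally, by translating the whole interval $(t',t_0)$ forward by $T$ under (\ref{per483}) and reading off the left limit at $t_0+T$; this converts (\ref{per877}) into $[t_1,t_0+T)\subset\mathbf{T}_{\mu}^{x}$ with no case distinction, after which $t_0<t_1$, the bound $t_1-T\leq t'$ (your contradiction here is essentially the paper's second case, except that you contradict $x(\xi)\neq\mu$ directly rather than (\ref{per878})), and (\ref{per880}) via Lemma \ref{Lem8} all follow. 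What your route buys is that the key identity $x(t_0+T-0)=\mu$ is isolated up front instead of being entangled with the location of $t'$. The one point you skip that the paper does address: since the theorem \emph{defines} $t_0,t_1$ by the requests (\ref{per875})--(\ref{per878}), one should check that such points exist --- $t_0$ exists because $x$ is not constant, and $t_1$ exists because otherwise $x$ would equal $x(t_0+T-0)$ on all of $(-\infty,t_0+T)$, contradicting (\ref{per875}) and (\ref{per876}); a sentence to that effect would complete the argument.
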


\begin{proof}
The fact that $x$ is not constant assures the existence of $t_{0}$ as defined
by (\ref{per875}), (\ref{per876}). On the other hand $t_{1}$ as defined by
(\ref{per891}), (\ref{per877}), (\ref{per878}) exists itself, since if,
against all reason, we would have%
\begin{equation}
\forall t<t_{0}+T,x(t)=x(t_{0}+T-0), \label{per884}%
\end{equation}
then (\ref{per875}), (\ref{per876}), (\ref{per884}) would be contradictory. By
the comparison between (\ref{per875}), (\ref{per876}), (\ref{per877}),
(\ref{per878}) we infer $t_{0}\leq t_{1}.$ From (\ref{per492}), (\ref{per875}%
), (\ref{per876}) we get $t^{\prime}<t_{0}.$

Case $t^{\prime}\in\lbrack t_{1}-T,t_{0})$

In this situation%
\begin{equation}
t^{\prime}+T\in\lbrack t_{1},t_{0}+T), \label{per885}%
\end{equation}%
\begin{equation}
\mu\overset{(\ref{per492})}{=}x(t^{\prime})\overset{(\ref{per483})}%
{=}x(t^{\prime}+T)\overset{(\ref{per877}),(\ref{per885})}{=}x(t_{1})
\label{per886}%
\end{equation}
and from (\ref{per875}), (\ref{per876}), $t_{0}\leq t_{1},$ (\ref{per886}) we
have that $t_{0}<t_{1}.$ (\ref{per879}) is true.

Let us note that (\ref{per877}), $t^{\prime}<t_{1},$ (\ref{per886}) imply
$[t_{1},t_{0}+T)\subset\mathbf{T}_{\mu}^{x}\cap\lbrack t^{\prime},\infty) $
and, from Lemma \ref{Lem8} together with (\ref{per875}), (\ref{per876}) we
obtain the truth of (\ref{per880}).

Case $t^{\prime}<t_{1}-T$

As $t_{1}-T<t_{0}$ we can write%
\begin{equation}
\mu\overset{(\ref{per875})}{=}x(t_{1}-T)\overset{(\ref{per483})}{=}x(t_{1})
\label{per887}%
\end{equation}
and the property of existence of the left limit of $x$ in $t_{1}$ shows the
existence of $\varepsilon>0$ with%
\begin{equation}
\forall t\in(t_{1}-\varepsilon,t_{1}),x(t)=x(t_{1}-0). \label{per888}%
\end{equation}
We take $\varepsilon^{\prime}\in(0,\min\{t_{1}-T-t^{\prime},\varepsilon\}),$
thus for any $t\in(t_{1}-T-\varepsilon^{\prime},t_{1}-T)$ we have%
\begin{equation}
t+T\in(t_{1}-\varepsilon^{\prime},t_{1})\subset(t_{1}-\varepsilon,t_{1}),
\label{per889}%
\end{equation}
and since%
\begin{equation}
t>t_{1}-T-\varepsilon^{\prime}>t^{\prime}, \label{per892}%
\end{equation}%
\begin{equation}
t<t_{1}-T<t_{0} \label{per893}%
\end{equation}
we conclude%
\begin{equation}
\mu\overset{(\ref{per875}),(\ref{per893})}{=}x(t)\overset{(\ref{per483}%
),(\ref{per892})}{=}x(t+T)\overset{(\ref{per888}),(\ref{per889})}{=}%
x(t_{1}-0). \label{per890}%
\end{equation}
Equations (\ref{per878}), (\ref{per887}), (\ref{per890}) are contradictory,
thus $t^{\prime}<t_{1}-T$ is impossible.
\end{proof}

\begin{example}
We take $x\in S^{(1)},$%
\[
x(t)=\chi_{(-\infty,0)}(t)\oplus\chi_{\lbrack1,2)}\oplus\chi_{\lbrack
3,5)}\oplus\chi_{\lbrack6,7)}\oplus\chi_{\lbrack8,10)}\oplus\chi
_{\lbrack11,12)}\oplus...
\]
In this example $\mu=1,t_{0}=0,t_{1}=3,T=5$ is prime period and $t^{\prime}%
\in\lbrack-2,0).$ We note that $T$ may be prime period without equality at
(\ref{per880}) but if we have equality at (\ref{per880}) then, from Theorem
\ref{The75}, $T$ is prime period.
\end{example}

\begin{remark}
Theorems \ref{The75} and \ref{The76} refer to the case when the periodic point
$\mu$ coincides with $x(-\infty+0).$ The situation when $\mu\neq x(-\infty+0)$
is not different in principle.
\end{remark}


\begin{thebibliography}{9}                                                                                                %
\bibitem {bib1}D. K. Arrowsmith, C. M. Place, An introduction to dynamical
systems, Cambridge University Press, (1990).

\bibitem {bib2}Michael Brin, Garrett Stuck, Introduction to dynamical systems,
Cambridge University Press, (2002).

\bibitem {bib3}Robert L. Devaney, A first course in chaotic dynamical systems.
Theory and experiment, Perseus Books Publishing, (1992).

\bibitem {bib4}Robert W. Easton, Geometric methods in discrete dynamical
systems, Oxford University Press, (1998).

\bibitem {bib5}Boris Hasselblatt, Anatole Katok, Handbook of dynamical
systems, Volume 1, Elsevier, (2005).

\bibitem {bib6}Richard A. Holmgren, A first course in discrete dynamical
systems, Springer-Verlag, (1994).

\bibitem {bib7}Jurgen Jost, Dynamical systems. Examples of complex behaviour,
Springer-Verlag, (2005).

\bibitem {bib8}D. V. Anosov, V. I. Arnold (Eds.) Dynamical systems I.,
Springer-Verlag, (Encyclopedia of Mathematical Sciences, Vol. 1) (1988).
\end{thebibliography}
\end{document}